\newcommand{\PP}{\mathbb{P}}
\newcommand{\QQ}{\mathbb{Q}}
\newcommand{\ZZ}{\mathbb{Z}}
\newcommand{\OO}{\mathcal{O}}
\DeclareMathOperator{\Gal}{Gal}
\DeclareMathOperator{\Res}{Res}
\DeclareMathOperator{\Disc}{Disc}
\DeclareMathOperator{\Aut}{Aut}
\DeclareMathOperator{\Hom}{Hom}
\DeclareMathOperator{\Crit}{Crit}
\DeclareMathOperator{\PC}{PC}
\DeclareMathOperator{\Br}{Br}
\theoremstyle{thmstyleone}
\newtheorem{theorem}{Theorem}[section]
\newtheorem{proposition}[theorem]{Proposition}
\newtheorem{lemma}[theorem]{Lemma}
\newtheorem{corollary}[theorem]{Corollary}
\newtheorem{question}[theorem]{Question}
\theoremstyle{thmstyletwo}
\newtheorem{remark}[theorem]{Remark}
\newtheorem{example}[theorem]{Example}
\theoremstyle{thmstylethree}
\newtheorem{definition}[theorem]{Definition}
\begin{document}

\title[A Dynamical Néron--Ogg--Shafarevich Criterion]{A Dynamical Néron--Ogg--Shafarevich Criterion via Orbital Arboreal Representations}

\author*[1]{\fnm{J.R.} \sur{Pérez-Buendía}\,\orcidlink{0000-0002-7739-4779}}\email{rogelio.perez@cimat.mx}
\affil*[1]{\orgname{SECIHTI--CIMAT, Unidad Mérida}, \orgaddress{\city{Mérida}, \state{Yucatán}, \country{México}}}

\abstract{Let $K$ be a non-archimedean local field and $\varphi:\PP^1\!\to\!\PP^1$ a rational endomorphism of degree $d\ge2$ defined over $K$. Working with a normalized integral homogeneous lift, we show, in the tame case ($p\nmid d$), that strict good reduction is equivalent to the existence of a nonempty Zariski open subset $U_k\subset\PP^1_k\!\setminus\!\PC(\tilde\varphi)$ over which the canonical residual morphism is finite étale of degree $d$. The criterion separates two complementary local invariants of the model: the resultant $\Res(F,G)$ controls residual degree drop, while the fiber discriminants $\Disc(F_{n,x})$ control étaleness of the corresponding residual fibers once full residual degree is ensured. Consequently, for every finite $x\in\OO_K$ whose reduction lies in $U_k$, the fiber polynomial has unit leading coefficient and unit discriminant, and all backward preimage extensions $K(X_n(x))/K$ are unramified for all $n\ge1$. We also introduce the \emph{orbital preimage tree} $T_{O^+(x)}=\varinjlim_n X_\infty(\varphi^n(x))$, the colimit in $G_K$-sets of the inverse preimage trees along the branch inclusions determined by the forward orbit, and the associated \emph{orbital arboreal Galois image} $\mathcal{G}_{O^+(x)}=\operatorname{Im}(G_K\to\Aut(T_{O^+(x)}))$. We isolate the forward-invariant safe locus $U_k^{\mathrm{safe}}=\bigcap_{m\ge 0}\tilde\varphi^{-m}(U_k)\subseteq U_k$ and the corresponding residually admissible integral points, on which strict good reduction is captured by the bijectivity of the orbital reduction map $X_n(x_m)\to\widetilde X_n(\bar x_m)$. This provides a canonical orbit-invariant arboreal framework that integrates with the local criterion and connects with arboreal Galois representations as developed by Boston–Jones, Jones, and others. The criterion is computable, propagates from level $1$ to all backward preimage levels, and yields both pointwise and orbit-level reformulations of arboreal Galois questions. We provide complete proofs and explicit examples over $\QQ_p$.}

\keywords{Non-archimedean dynamics, arithmetic dynamics, good reduction, arboreal representations, Galois representations, Néron-Ogg-Shafarevich criterion}

\pacs[MSC Classification]{37P05, 11S15, 11S82, 37P20, 14H25}
\maketitle

\section{Introduction}\label{sec:intro}

Good reduction precludes the emergence of new ramification in the Galois tower, thereby facilitating the analysis of rational dynamical systems over local fields. This phenomenon establishes connections among non-archimedean analysis, arithmetic geometry, and Galois theory. A central principle, analogous to the Néron-Ogg-Shafarevich criterion for abelian varieties, asserts that good reduction is characterized by the absence of ramification in the natural towers associated with the system (see, e.g., \cite{Benedetto2019, BenedettoActa2014, BenedettoIMRN2015, SilvermanADS}). For results on degree bounds for attaining potentially good reduction after a finite extension, see \cite{BenedettoIMRN2015}. In addition to abelian varieties, $p$-adic NOS-type criteria have been established for curves via the $p$-adic unipotent fundamental group \cite{AndreattaIovitaKim2015}, and for $K3$ surfaces using monodromy and crystalline techniques \cite{HernandezMadaPadova2021, LiedtkeMatsumoto2018, PerezBuendiaAMQ2019}.

A rigorous Galois criterion for rational maps, however, must avoid a common pitfall: quantifying over \emph{all} $x \in \PP^1(\overline K)$ fails, as a point $x$ may itself generate a ramified extension $K(x)/K$. This would make the entire tower $K(X_n(x))/K$ ramified, regardless of the dynamics of $\varphi$. A precise criterion must therefore restrict the test points $x$ to a suitable unramified integral locus where the dynamics truly determine the ramification.

To formulate such a criterion in an orbit-invariant arboreal form, we introduce the \emph{orbital preimage tree}
\[
T_{O^+(x)}\ :=\ \varinjlim_n X_\infty(\varphi^n(x)),
\]
obtained by gluing the backward preimage trees along the forward orbit $O^+(x)$ via natural branch inclusions, together with the associated \emph{orbital arboreal Galois image} $\mathcal{G}_{O^+(x)}:=\operatorname{Im}(G_K\to\Aut(T_{O^+(x)}))$. This is a canonical, orbit-invariant arboreal Galois object: $T_{O^+(x)}$ is characterized by a universal property as a colimit in the category of $G_K$-sets (Proposition~\ref{prop:universal-T}). Theorem~\ref{thm:main} packages the classical strict-good-reduction criterion with the residual étale-locus condition needed to control ramification in the backward preimage towers, working over the locus $U_k = \PP^1_k \setminus \PC(\widetilde\varphi)$. Consequently, over this locus the backward preimage extensions $K(X_n(x))/K$ are unramified for every finite $x\in\OO_K$ with $\bar x\in U_k$, and inertia $I_v$ acts trivially on $X_\infty(x)$. At the orbit level, strict good reduction is reflected not merely by inertia-triviality but by the bijectivity of the orbital reduction maps (\S\ref{sec:orbital-open-image}).

The local theory of arithmetic dynamics now sits at the meeting point of four mature lines of work: nonarchimedean reduction theory \cite{Ben01}, branch-locus and critical-good-reduction criteria \cite{Can11, Can15}, arboreal Galois representations \cite{BostonJones, JonesBesancon2013}, and the ramification of backward towers \cite{Ait04, Cul12, Bri15}. Within this picture, postcritical behavior emerges as the principal arithmetic obstruction to ramification: when the postcritical orbit is finite, preimage towers are finitely ramified \cite{Ait04, Bri15}, while delicate phenomena can persist locally even in seemingly tame regimes \cite{And16}. The dynamical Néron--Ogg--Shafarevich question sits precisely at the intersection of these threads, and recent work has approached it from several angles. Benedetto's criteria \cite{BenedettoActa2014, BenedettoIMRN2015} address \emph{potential} good reduction---whether a rational map acquires good reduction after a finite extension---while the preprint of Adams \cite{adams2022} adopts an anabelian standpoint, working with Galois actions on fundamental-group objects attached to complements of precritical loci, and characterizes \emph{good critical reduction}. This adds to residual degree preservation a non-collision requirement on the critical and branch data: every map with good critical reduction has strict good reduction, but not conversely (\cite{Can11} records the precise relationship under separability).

The present paper proves a direct characterization of \emph{strict} good reduction over the ground field itself, in the tame case $p\nmid d$, working in the arboreal setting with pointwise preimage towers and their orbital colimit rather than with anabelian fundamental-group objects. Theorem~\ref{thm:main} provides an \emph{orbital refinement} of the classical local criterion \cite[Theorem~2.15]{SilvermanADS}, specialized to the residual étale locus $U_k$. The local criterion separates two complementary invariants of the residual model: the resultant $\Res(F,G)$ controls residual degree drop, equivalently whether the canonical residual morphism $\widetilde\varphi$ has full degree $d$, while, once full residual degree is ensured, the fiber discriminants $\Disc(F_{n,x})$ control étaleness of the corresponding residual fibers (Remark~\ref{rem:two-invariants}). The unit leading coefficient and unit discriminant conditions on $F_{1,x}$ then propagate to all backward preimage levels by persistence of the unit resultant under iteration. The orbital framework then packages the ramification data attached to the entire forward orbit into a single canonical Galois object, the orbital arboreal image $\mathcal{G}_{O^+(x)}$, while remaining within the arboreal Galois setting. Restricting to the forward-invariant safe locus $U_k^{\mathrm{safe}}\subseteq U_k$, strict good reduction admits a clean orbit-level reformulation in terms of bijective specialization of the orbital reduction map (Section~\ref{sec:orbital-open-image}); pure inertia-triviality on the orbital tree is a necessary but not sufficient condition, as a one-line example over $\QQ_5$ illustrates.

\section{Preliminaries}\label{sec:prelim}
\subsection{Good reduction of rational maps}\label{subsec:goodred}
Let $K$ be a non-archimedean local field, with $\OO=\OO_K$ its ring of integers, $\mathfrak m$ its maximal ideal, and $k=\OO/\mathfrak m$ its (finite, perfect) residue field. We write
\[
\varphi([X:Y])=[F(X,Y):G(X,Y)],\quad F,G\in K[X,Y]\text{ homogeneous, coprime}.
\]
Throughout the local arguments, we work with a \emph{normalized integral homogeneous lift}
\[
\Phi=[F:G],\qquad F,G\in\OO_K[X,Y]_d,
\]
meaning that $F$ and $G$ have no common factor over $K$ and at least one coefficient of $F$ or $G$ is a unit (equivalently, the content of the pair $(F,G)$ is a unit). Any integral lift can be normalized by dividing through by the common content.

The reductions $\widetilde F,\widetilde G\in k[X,Y]_d$ are not both identically zero (by normalization), so they define a rational map
\[
[\widetilde F:\widetilde G]\colon \PP^1_k\dashrightarrow \PP^1_k
\]
on the complement of the proper closed locus $V(\widetilde F,\widetilde G)$. Since $\PP^1_k$ is a regular curve and the target $\PP^1_k$ is proper, this rational map extends uniquely to a morphism
\[
\widetilde\varphi:\PP^1_k\longrightarrow\PP^1_k,
\]
the \emph{canonical residual morphism}. Computationally, writing
\[
\widetilde F=h\,F_0,\qquad \widetilde G=h\,G_0,\qquad h:=\gcd(\widetilde F,\widetilde G),
\]
one has $\widetilde\varphi=[F_0:G_0]$. We put
\[
e:=\deg(\widetilde\varphi)\le d.
\]
After base change to $\bar k$, this construction gives the corresponding geometric residual morphism, and the degree remains $e$.

We say $\varphi$ has \emph{strict good reduction} at $v$ if $e=d$, equivalently $\Res(F,G)\in\OO_K^\times$; see, e.g., \cite[Theorem~2.15]{SilvermanADS}.

We write $\Res(F,G)$ for the resultant of homogeneous coprime forms $F,G$ defining $\varphi$,
$\Disc(f)$ for the discriminant of a separable polynomial $f$, and $\deg(\tilde\varphi)$ for the
degree of the reduction of $\varphi$ in any integral model. Reductions of maps/polynomials are
denoted by a tilde (e.g. $\tilde\varphi$, $\tilde P$), and reductions of points by a bar (e.g. $\bar x$).
For iterates, we consistently write $\tilde\varphi^{\,m}(\Crit(\tilde\varphi))$.
We use $\Br(\cdot)$ for branch loci, and $\varprojlim$, $\varinjlim$ for projective/direct limits.

\subsection{Preimage towers and Galois action}\label{subsec:towers}
For $x\in \PP^1(\overline K)$ and $n\ge0$, define
\[
X_n(x)=\{y\in \PP^1(\overline K): \varphi^n(y)=x\}.
\]
If $\varphi^n(T)=P_n(T)/Q_n(T)$ with $P_n,Q_n\in K[T]$ coprime, then $X_n(x)$ are the roots of $F_{n,x}(T)=P_n(T)-x\,Q_n(T)\in K(x)[T]$ (in particular, $P_1,Q_1$ represent $\varphi$ at level $n=1$). The Galois group $G_{K(x)}=\Gal(\overline K/K(x))$ acts by permuting $X_n(x)$. The transition maps
\[
\pi_{n+1}:X_{n+1}(x)\to X_n(x),\quad \xi\mapsto \varphi(\xi),
\]
are $G_{K(x)}$-equivariant, defining the projective limit $X_\infty(x)=\varprojlim_n X_n(x)$ and the representation
\[
\rho_x: G_{K(x)}\longrightarrow \Aut(X_\infty(x)).
\]

\subsection{Functoriality along a forward orbit}\label{subsec:functoriality}
Let $x\in \PP^1(\overline K)$ and $y=\varphi(x)$. With the convention of \S\ref{subsec:towers}, an element of $X_\infty(x)$ is a compatible backward orbit
\[
(x,\xi_1,\xi_2,\ldots),\qquad \varphi(\xi_1)=x,\quad \varphi(\xi_{i+1})=\xi_i\ \ (i\ge 1),
\]
whose initial coordinate is forced to be $x$ (since $X_0(x)=\{x\}$). The backward tree above $x$ is naturally identified with the branch of the backward tree above $y=\varphi(x)$ rooted at $x$. Explicitly, there is a canonical injective map
\[
\iota_{x\to y}:X_\infty(x)\hookrightarrow X_\infty(y),\qquad (x,\xi_1,\xi_2,\ldots)\longmapsto (y,x,\xi_1,\xi_2,\ldots).
\]
Equivalently, at finite level, if $x_n=\varphi^n(x)$, then for every $m\ge 0$ there is a natural inclusion
\[
X_m(x_n)\hookrightarrow X_{m+1}(x_{n+1}),
\]
since $\varphi^m(z)=x_n$ implies $\varphi^{m+1}(z)=\varphi(x_n)=x_{n+1}$.
If the forward orbit $O^+(x)=\{x_n\}_{n\ge 0}$ is defined over a field $L$ (in particular over $K$, the case relevant for the local criterion below), these branch inclusions are $G_L$-equivariant and yield a direct system $\{X_\infty(x_n)\}_{n\ge 0}$ of rooted preimage trees along the forward orbit.

\subsection{The orbital preimage tree and orbital arboreal image}\label{subsec:colimit}
Let $x\in\PP^1(K)$ and set $x_n=\varphi^n(x)$. The branch inclusions of \S\ref{subsec:functoriality} define a direct system of rooted preimage trees
\[
X_\infty(x_0)\hookrightarrow X_\infty(x_1)\hookrightarrow X_\infty(x_2)\hookrightarrow\cdots,
\]
and we define the \emph{orbital preimage tree} of the forward orbit $O^+(x)$ by
\[
T_{O^+(x)}\ :=\ \varinjlim_{n}X_\infty(x_n).
\]
Thus $T_{O^+(x)}$ is obtained by gluing the backward preimage trees along the forward orbit, identifying the tree above $x_n$ with the branch of the tree above $x_{n+1}$ rooted at $x_n$. Since the orbit is $K$-rational, $G_K$ acts compatibly on each $X_\infty(x_n)$, hence on the colimit. We obtain the \emph{orbital arboreal representation}
\[
\rho_{O^+(x)}:G_K\longrightarrow \Aut(T_{O^+(x)}),
\]
and its image, the \emph{orbital arboreal image},
\[
\mathcal{G}_{O^+(x)}\ :=\ \operatorname{Im}\bigl(\rho_{O^+(x)}\bigr)\ \subseteq\ \Aut(T_{O^+(x)}).
\]
We refer to the construction $\varinjlim_n X_\infty(x_n)$ as the \emph{orbital colimit}; this term refers to the colimit of the underlying preimage trees, not to a direct limit of pointwise Galois images. For the purposes of this paper, we use only the induced action on finite preimage levels; no additional topological structure on the orbital image is needed.

\begin{proposition}[Equivariance and universal property of the orbital colimit]\label{prop:universal-T}
Assume the forward orbit $O^+(x)\subset\PP^1(K)$, and set $x_n=\varphi^n(x)$. The branch inclusions
\[
\iota_n:X_\infty(x_n)\hookrightarrow X_\infty(x_{n+1}),\qquad (x_n,\xi_1,\xi_2,\ldots)\longmapsto (x_{n+1},x_n,\xi_1,\xi_2,\ldots),
\]
are $G_K$-equivariant. Hence the orbital preimage tree
\[
T_{O^+(x)}\ =\ \varinjlim_n X_\infty(x_n)
\]
is the colimit of this directed system in the category of $G_K$-sets. Equivalently, for every $G_K$-set $Y$, composition with the structure maps $j_n:X_\infty(x_n)\to T_{O^+(x)}$ induces a natural bijection
\[
\Hom_{G_K}\bigl(T_{O^+(x)},Y\bigr)\ \simeq\ \varprojlim_n \Hom_{G_K}\bigl(X_\infty(x_n),Y\bigr),
\]
where the transition maps on the right are given by precomposition with the branch inclusions $\iota_n$.
\end{proposition}
\begin{proof}
Since $x_n\in K$ for all $n$, each $x_n$ is fixed by $G_K$. The branch inclusions commute with the natural $G_K$-action on compatible backward orbits, hence form a directed system in $G_K$-sets. The displayed bijection is precisely the universal property of the filtered colimit in $G_K$-sets.
\end{proof}

\subsection{Two technical lemmata}\label{subsec:lemmata}
\begin{lemma}[Strict good reduction persists under iteration]\label{lem:unit-resultant}
If $\varphi$ has strict good reduction, then for every $n\ge1$ the iterate $\varphi^n$ has strict good reduction. Equivalently, for every $n\ge1$ there exists an integral presentation $(P_n,Q_n)$ of $\varphi^n$ with
\[
\Res(P_n,Q_n)\in\OO^\times.
\]
\end{lemma}
\begin{proof}
By \S\ref{subsec:goodred}, strict good reduction is equivalent to $\Res(F,G)\in\OO^\times$. This unit-resultant condition is stable under iteration (Benedetto~\cite[Theorem~B]{Ben01}): for every $n\ge1$ there exists an integral presentation $(P_n,Q_n)$ of $\varphi^n$ with $\Res(P_n,Q_n)\in\OO^\times$, hence $\varphi^n$ has strict good reduction.
\end{proof}

\begin{lemma}[Unit discriminant $\Rightarrow$ unramified]\label{lem:disc-unram}
Let $f(T)\in\OO[T]$ be a separable polynomial with unit leading coefficient. If $\Disc(f)\in\OO^\times$, then the splitting field of $f$ over $K$ is unramified. In particular, for any set $Y$ of roots of $f$, $K(Y)/K$ is unramified.
\end{lemma}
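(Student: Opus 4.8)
The plan is to reduce the statement to a standard fact in algebraic number theory: a polynomial over $\OO$ with unit leading coefficient and unit discriminant generates an unramified extension. First I would observe that, since $f$ has unit leading coefficient, it is (up to a unit) monic, so its roots are all integral over $\OO$ and lie in $\OO_L$ for $L$ the splitting field. Thus $\OO[T]/(f) \hookrightarrow \OO_L^{\,r}$ via the roots, and it suffices to show $L/K$ is unramified. Because $\Disc(f)\in\OO^\times$, the reduction $\tilde f \in k[T]$ still has nonzero discriminant, hence is separable over $k$; in particular $\tilde f$ has $\deg f$ distinct roots in $\bar k$ and no repeated factors.

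Next I would invoke Hensel's lemma in the following form: over the maximal unramified extension $K^{\mathrm{nr}}$ (with ring of integers $\OO^{\mathrm{nr}}$ and residue field $\bar k$), the factorization of $\tilde f$ into distinct linear factors over $\bar k$ lifts to a factorization of $f$ into linear factors over $\OO^{\mathrm{nr}}$. Concretely, for each simple root $\bar\alpha \in \bar k$ of $\tilde f$, since $\tilde f'(\bar\alpha)\ne0$ (simplicity, guaranteed by the unit discriminant), Hensel's lemma produces a unique root $\alpha \in \OO^{\mathrm{nr}}$ reducing to $\bar\alpha$. Doing this for all residual roots accounts for $\deg f$ roots of $f$, which is all of them; hence $f$ splits completely over $K^{\mathrm{nr}}$. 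Therefore the splitting field $L \subseteq K^{\mathrm{nr}}$, which means $L/K$ is unramified. The last sentence of the lemma is then immediate: for any set $Y$ of roots of $f$, $K(Y) \subseteq L \subseteq K^{\mathrm{nr}}$, so $K(Y)/K$ is unramified.

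I expect the only real point requiring care is the reduction-of-discriminant step: one must know that the discriminant commutes with reduction when the leading coefficient is a unit, i.e. $\widetilde{\Disc(f)} = \Disc(\tilde f)$ in $k$ (this uses that $\deg\tilde f = \deg f$, which is exactly where the unit leading coefficient is needed), so that $\Disc(f)\in\OO^\times$ forces $\Disc(\tilde f)\ne0$ and hence $\tilde f$ separable. This is the crux; everything else is a direct application of Hensel's lemma and the definition of an unramified extension as one contained in $K^{\mathrm{nr}}$. An alternative packaging avoiding the explicit discriminant-reduction identity: note $\Disc(f)$ is, up to the leading coefficient, $\pm\Res(f,f')$, and the formula $\Res(f,f') = \mathrm{lc}(f)^{\deg f'}\prod_i f'(\alpha_i)$ over the roots $\alpha_i$ shows directly that $\Disc(f)\in\OO^\times$ is equivalent to all $f'(\alpha_i)\in\OO_L^\times$, i.e. no two roots are congruent modulo the maximal ideal of $\OO_L$ — which is precisely the separability of $\tilde f$ and the Hensel-lifting hypothesis. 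Either route closes the argument in a few lines.
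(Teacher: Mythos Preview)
Your proof is correct and shares the same opening observation as the paper's --- that the unit leading coefficient forces $\deg\tilde f=\deg f$ and the unit discriminant forces $\tilde f$ to be separable --- but diverges in the second half. The paper invokes Dedekind's criterion to conclude that the different is trivial and hence $\mathfrak m$ is unramified in each $K(\alpha_i)$, then appeals to the fact that a compositum of unramified extensions is unramified to handle the full splitting field. You instead pass to the maximal unramified extension $K^{\mathrm{nr}}$ and use Hensel's lemma to lift the distinct residual roots, concluding directly that $f$ splits in $K^{\mathrm{nr}}$ and hence $L\subseteq K^{\mathrm{nr}}$. Your route is arguably cleaner here: it avoids the different altogether, handles the entire splitting field in one stroke (no compositum argument needed), and makes the role of the local hypothesis (henselian with perfect residue field) completely transparent. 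The paper's route has the advantage of connecting to the general ramification--different machinery of \cite[Ch.~III]{SerreLF}, which is the natural language if one later wants quantitative ramification information; but for the bare unramifiedness statement your Hensel argument is the more economical one.
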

\begin{proof}
Since $K$ is a local field, its residue field $k$ is perfect. $\Disc(f)\in\OO^\times$ implies the reduction $\tilde f \in k[T]$ is separable. Moreover, the unit leading coefficient ensures that $\deg \tilde f=\deg f$, so the reduction has the correct degree and Dedekind's criterion applies directly. By Dedekind's criterion, this implies the different is trivial, and thus the prime $\mathfrak m$ does not ramify in the splitting field; see, e.g., \cite[Ch.~III, §5, Cor.~1]{SerreLF}. Moreover, a compositum of unramified extensions remains unramified, so the full splitting field of $f$ over $K$ is unramified as well.
\end{proof}

\begin{definition}[Postcritical set]\label{def:pcset}
For the reduction $\widetilde\varphi:\PP^1_k\to\PP^1_k$, define the reduced postcritical set
\[
\PC(\widetilde\varphi)\ :=\ \bigcup_{m\ge 1}\widetilde\varphi^{\,m}\!\big(\Crit(\widetilde\varphi)\big)\ \subset \PP^1_k.
\]
\end{definition}

\begin{remark}\label{rem:critdivisor}
Equivalently, $\Crit(\widetilde\varphi)$ is the ramification divisor; in homogeneous coordinates it is cut out by $(F_0)_X(G_0)_Y-(F_0)_Y(G_0)_X$ (subscripts denote partial derivatives), for the reduced homogeneous presentation $\widetilde\varphi([X:Y])=[F_0:G_0]$ obtained after cancelling the residual common factor $h=\gcd(\widetilde F,\widetilde G)$.
\end{remark}

\begin{proposition}[\'Etale fibers for all iterates $\Leftrightarrow$ non-postcritical]\label{prop:etale-iff-PC}
Let $\widetilde\varphi:\PP^1_k\to\PP^1_k$ be a separable morphism of positive degree, with $k$ perfect.
For $\bar x\in\PP^1_k$, the following are equivalent:
\begin{enumerate}[label=(\alph*)]
\item For every $n\ge1$, the fiber of $\widetilde\varphi^{\,n}$ over $\bar x$ is étale.
\item $\bar x\notin\PC(\widetilde\varphi)=\bigcup_{m\ge1}\widetilde\varphi^{\,m}\!\big(\Crit(\widetilde\varphi)\big)$.
\end{enumerate}
\end{proposition}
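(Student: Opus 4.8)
The plan is to show that condition (a) is equivalent to $\bar x$ avoiding the branch locus of \emph{every} iterate $\tilde\varphi^{\,n}$, and then to identify the union of these branch loci with $\PC(\tilde\varphi)$ by means of the multiplicativity of ramification indices; at that point both (a) and (b) unwind to the same thing.

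First I would dispose of the inseparable case. If $\tilde\varphi$ is inseparable then the Wronskian $F_XG_Y-F_YG_X$ of any integral model vanishes identically, so $\Crit(\tilde\varphi)=\PP^1_k$ and hence $\PC(\tilde\varphi)=\PP^1_k$, which makes (b) false for every $\bar x$; on the other hand a finite inseparable endomorphism of a curve over a perfect field---it factors through the $k$-linear Frobenius of $\PP^1$, all of whose fibers are non-reduced---has non-reduced, hence non-\'etale, fibers everywhere, so (a) also fails for every $\bar x$, and the equivalence holds vacuously. So I may assume $\tilde\varphi$, and therefore each $\tilde\varphi^{\,n}$, is separable. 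Since $k$ is perfect, a finite $k$-scheme is \'etale iff it is reduced; applying this to the scheme-theoretic fiber of $\tilde\varphi^{\,n}$ over $\bar x$ shows it is \'etale precisely when $\tilde\varphi^{\,n}$ is unramified at every geometric point above $\bar x$, i.e.\ precisely when $\bar x\notin\Br(\tilde\varphi^{\,n}):=\tilde\varphi^{\,n}\big(\Crit(\tilde\varphi^{\,n})\big)$; here, for separable $g$, I read $\Crit(g)$ as the support of the ramification divisor, which equals $\{y:e_g(y)>1\}$ since the different exponent satisfies $d_y\ge e_g(y)-1>0$ exactly when $e_g(y)>1$ (and $d_y=0$ when $e_g(y)=1$). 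Thus (a) is equivalent to $\bar x\notin\bigcup_{n\ge1}\Br(\tilde\varphi^{\,n})$.

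The heart of the argument is the set-theoretic identity $\bigcup_{n\ge1}\Br(\tilde\varphi^{\,n})=\PC(\tilde\varphi)$. To prove it I would invoke the multiplicativity of ramification indices along a composition, $e_{\tilde\varphi^{\,n}}(y)=\prod_{i=0}^{n-1}e_{\tilde\varphi}\big(\tilde\varphi^{\,i}(y)\big)$, which needs no tameness hypothesis as it only records the valuation of a pulled-back uniformizer. It follows that $e_{\tilde\varphi^{\,n}}(y)>1$ iff some factor exceeds $1$, i.e.
\[
\Crit(\tilde\varphi^{\,n})=\bigcup_{i=0}^{n-1}(\tilde\varphi^{\,i})^{-1}\big(\Crit(\tilde\varphi)\big)
\]
as sets of geometric points. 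Applying $\tilde\varphi^{\,n}$ and using that each $\tilde\varphi^{\,i}$ is surjective (a non-constant endomorphism of $\PP^1$ is onto), so that $\tilde\varphi^{\,n}\big((\tilde\varphi^{\,i})^{-1}(S)\big)=\tilde\varphi^{\,n-i}\big(\tilde\varphi^{\,i}((\tilde\varphi^{\,i})^{-1}(S))\big)=\tilde\varphi^{\,n-i}(S)$ for every subset $S$, I obtain
\[
\Br(\tilde\varphi^{\,n})=\bigcup_{i=0}^{n-1}\tilde\varphi^{\,n-i}\big(\Crit(\tilde\varphi)\big)=\bigcup_{j=1}^{n}\tilde\varphi^{\,j}\big(\Crit(\tilde\varphi)\big),
\]
and taking the union over all $n\ge1$ gives $\bigcup_{n\ge1}\Br(\tilde\varphi^{\,n})=\bigcup_{j\ge1}\tilde\varphi^{\,j}(\Crit(\tilde\varphi))=\PC(\tilde\varphi)$. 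Combined with the previous paragraph this yields (a) $\Leftrightarrow$ $\bar x\notin\PC(\tilde\varphi)$ $\Leftrightarrow$ (b); note that (b) $\Rightarrow$ (a) is in fact immediate from the inclusion $\Br(\tilde\varphi^{\,n})=\bigcup_{j=1}^{n}\tilde\varphi^{\,j}(\Crit(\tilde\varphi))\subseteq\PC(\tilde\varphi)$ alone.

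The manipulations with images and preimages are routine, so I expect the only real friction to be the characteristic-$p$ bookkeeping: one must make sure that the support of the ramification divisor genuinely coincides with the locus $\{e>1\}$ even with wild ramification present, that multiplicativity of $e$ requires no tameness, and that ``\'etale over $k$'' really does reduce to reducedness of the geometric fiber because $k$ is perfect. I would also fix at the outset the convention that $\Crit$ and $\PC$ are regarded as reduced closed subsets (plain sets of points), so that the set-theoretic operations above are legitimate.
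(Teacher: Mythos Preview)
Your proposal is correct and follows essentially the same route as the paper: both reduce (a) to the condition $\bar x\notin\bigcup_{n\ge1}\Br(\tilde\varphi^{\,n})$ via the equivalence of \'etaleness of the fiber with absence from the branch locus (using perfectness of $k$), and both identify this union with $\PC(\tilde\varphi)$ via the chain-rule identity $\Crit(\tilde\varphi^{\,n})=\bigcup_{i=0}^{n-1}(\tilde\varphi^{\,i})^{-1}(\Crit(\tilde\varphi))$ and its image under $\tilde\varphi^{\,n}$. The paper simply asserts the chain-rule identities as standard and relegates the inseparable case to a subsequent remark, whereas you supply the derivation from multiplicativity of ramification indices and the surjectivity of $\tilde\varphi^{\,i}$, and treat the inseparable case up front; these are expository differences, not a different argument.
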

\begin{proof}
Recall the standard chain rule for branch loci on $\PP^1$:
\[
\Crit\!\big(\widetilde\varphi^{\,n}\big)
=\bigcup_{j=0}^{n-1}(\widetilde\varphi^{\,j})^{-1}\!\big(\Crit(\widetilde\varphi)\big),
\]
\[
\Br\!\big(\widetilde\varphi^{\,n}\big)
=\widetilde\varphi^{\,n}\!\big(\Crit(\widetilde\varphi^{\,n})\big)
=\bigcup_{m=1}^{n}\widetilde\varphi^{\,m}\!\big(\Crit(\widetilde\varphi)\big)
\ \subset\ \PC(\widetilde\varphi).
\]
Since $k$ is perfect, étaleness of the fiber over $\bar x$ for $\widetilde\varphi^{\,n}$ is equivalent to
$\bar x\notin\Br(\widetilde\varphi^{\,n})$.
($\neg$(b)$\Rightarrow\neg$(a)) If $\bar x\in\PC(\widetilde\varphi)$, then $\bar x\in\Br(\widetilde\varphi^{\,m})$
for some $m\ge1$, so the fiber of $\widetilde\varphi^{\,m}$ over $\bar x$ is not étale.
(a)$\Rightarrow$(b) contrapositively: if for some $n\ge1$ the fiber of $\widetilde\varphi^{\,n}$ over $\bar x$ is not étale, then $\bar x\in\Br(\widetilde\varphi^{\,n})\subset\PC(\widetilde\varphi)$. The set $\PC(\widetilde\varphi)$ is finite because $\Crit(\widetilde\varphi)$ is a finite set (the zero locus of a homogeneous form of degree $2d-2$) and, over the finite residue field $k$, each of its points has finite forward orbit.
\end{proof}

\begin{remark}\label{rem:openlocus}
In particular, $U_k:=\PP^1_k\setminus\PC(\widetilde\varphi)$ is a nonempty Zariski open subset
on which every fiber of $\widetilde\varphi^{\,n}$ is étale for all $n\ge1$.
\end{remark}

\begin{remark}\label{rem:insep}
If $\widetilde\varphi$ is purely inseparable, then every iterate is ramified everywhere and
$\PC(\widetilde\varphi)=\PP^1_k$, so the statement is vacuous and consistent with the proposition.
\end{remark}

\section{Main Result}\label{sec:main}

Throughout this section and the remainder of the paper we work in the \text{tame case}, assuming that the residue characteristic $p$ does not divide the degree of the map $d$ (i.e., $p\nmid d$). This excludes the most immediate inseparability phenomena in the degree-$d$ fibers considered below; the wild case may involve inseparability and additional ramification breaks and is not treated here.

Good reduction is characterized not by all points, but specifically by those integral points whose fibers avoid the reduced postcritical locus.

\begin{theorem}[Galois criterion --- strict local version]\label{thm:main}
Let $K$ be a non-archimedean local field with ring of integers $\OO_K$, maximal ideal $\mathfrak{m}$, and residue field $k$ of characteristic $p$. Let $\varphi\in K(z)$ have degree $d\ge 2$, and assume $p\nmid d$. Let $\widetilde\varphi:\PP^1_k\to\PP^1_k$ be the canonical residual morphism associated to a normalized integral lift, with $e=\deg(\widetilde\varphi)\le d$ as in \S\ref{subsec:goodred}. The following are equivalent:
\begin{enumerate}[label=(\arabic*)]
\item $\varphi$ has strict good reduction (equivalently, $\Res(F,G)\in\OO_K^\times$).
\item $e=d$.
\item There exists a nonempty Zariski open subset
\[
U_k\subseteq \PP^1_k,\qquad U_k\cap\PC(\widetilde\varphi)=\varnothing,
\]
such that the restricted morphism
\[
\widetilde\varphi^{-1}(U_k)\longrightarrow U_k
\]
is finite \'etale of degree $d$. Equivalently, for every geometric point $\bar x\to U_k$, the fiber $\widetilde\varphi^{-1}(\bar x)$ has degree $d$ and is étale (Proposition~\ref{prop:etale-iff-PC}).
\end{enumerate}

\smallskip
\noindent\emph{Consequence.} Under these equivalent conditions, for every finite point $x\in\OO_K$ with reduction $\bar x\in U_k$ and every $n\ge 1$, the level-$n$ fiber polynomial
\[
F_{n,x}(T)=P_n(T)-x\,Q_n(T)\in\OO_K[T]
\]
from \S\ref{subsec:towers} has unit leading coefficient $\operatorname{lc}(F_{n,x})$ and unit discriminant:
\[
\operatorname{lc}(F_{n,x})\in\OO_K^\times,\qquad \Disc(F_{n,x})\in\OO_K^\times.
\]
Hence $K(X_n(x))/K$ is unramified, and the inertia group $I_v$ acts trivially on $X_\infty(x)$.
\end{theorem}

\begin{remark}[Resultant and discriminants detect different phenomena]\label{rem:two-invariants}
The criterion separates two local invariants of the model. The resultant $\Res(F,G)$ detects residual degree drop, equivalently the presence of base points in the residual rational map $[\widetilde F:\widetilde G]$. The fiber discriminants $\Disc(F_{n,x})$ detect étaleness of the corresponding residual fibers \emph{once the canonical residual morphism has full degree $e=d$}. Thus the resultant controls the global degree of the residual morphism, whereas the discriminants control fiberwise separability. Neither invariant alone suffices: fiber discriminants may be unit on some fibers even when the canonical residual morphism has lower degree. This can occur when $\widetilde F$ and $\widetilde G$ share a nonconstant common factor in $k[X,Y]$.
\end{remark}

\begin{corollary}[Moduli version, $\mathrm{PGL}_2(K)$-invariant]\label{cor:moduli}
In the usual moduli sense, i.e.\ up to $K$-rational $\mathrm{PGL}_2$-conjugacy, the criterion is expressed via the minimal resultant. Let $R_v(\varphi):=\min_{M\in\mathrm{PGL}_2(K)} v(\mathrm{Res}(\varphi^M))$ be the valuation of the minimal resultant. Then $R_v(\varphi)=0$ if and only if there exists a conjugate $M\in\mathrm{PGL}_2(K)$ such that the canonical residual morphism of $\varphi^M$ has degree $d$. Equivalently, condition (3) of Theorem~\ref{thm:main} holds for $\varphi^M$.
\end{corollary}
\begin{proof}
By definition, $R_v(\varphi)=0$ if and only if there exists $M\in\mathrm{PGL}_2(K)$ with
$\Res(\varphi^M)\in\OO_K^\times$ and $\deg\widetilde{\varphi^M}=d$, i.e., $\varphi^M$ has strict good reduction. Applying Theorem~\ref{thm:main} to $\varphi^M$ shows that condition \emph{(2)} holds precisely in this case. This is equivalent to $R_v(\varphi)=0$, as claimed.
\end{proof}

\begin{remark}[Relative inertia]\label{rem:relative_inertia}
Suppose one insists on quantifying over \emph{all} $x \in \PP^1(\overline K)$. In that case, the correct statement involves the relative inertia group $I_{v(x)} \subset G_{K(x)}$ for the appropriate valuation $v(x)$ on $K(x)$. Theorem \ref{thm:main} provides a cleaner criterion by fixing the base field $K$ and restricting the test points $x$ to the unramified integral locus.
\end{remark}

\begin{remark}[Wild case $p \mid d$]
If $p \mid d$, the situation is substantially more complicated. Inseparability in the reduction $\tilde\varphi$ may force the postcritical set to be the entire line ($\PC(\tilde\varphi)=\PP^1_k$), making our criterion vacuous. The wild case requires a separate, deeper analysis of ramification breaks and is beyond the scope of this paper.
\end{remark}

\section{Orbital open image: a canonical formulation}\label{sec:orbital-open-image}

In the classical (pointwise) framework, one fixes a basepoint $x\in\PP^1(\overline K)$ and studies the arboreal representation
\[
\rho_x:\ G_{K(x)}\longrightarrow \Aut\!\big(X_\infty(x)\big),
\]
asking whether the image is ``large'' (e.g., open of finite index) in a natural automorphism group (typically the automorphism group of a regular $d$-ary rooted tree when no local branching obstructions arise).

\begin{remark}[Intuition]\label{rem:orbital_intuition}
Heuristically, the orbital construction assembles the backward preimage trees of points along the forward orbit into a single canonical Galois object (canonical in the sense of the universal property of Proposition~\ref{prop:universal-T}). Each tree $X_\infty(x_n)$ embeds as a branch of $X_\infty(x_{n+1})$ via the inclusion $\iota_{x_n\to x_{n+1}}$, so the orbital tree $T_{O^+(x)}$ is the direct limit of these preimage trees, and the orbital arboreal image $\mathcal{G}_{O^+(x)}$ is the image of the induced Galois action on $T_{O^+(x)}$. This eliminates basepoint dependence while retaining all dynamical ramification data.
\end{remark}

\begin{proposition}[Canonicity and independence of basepoint]\label{prop:canonic-orbital}
Let $x\in\PP^1(K)$ and set $x_n=\varphi^n(x)$. Then:
\begin{enumerate}[label=(\alph*)]
\item $K(O^+(x))=K(O^+(x_n))$ for all $n\ge0$; hence the absolute Galois group of $K(O^+(x))$ is canonically attached to the orbit.
\item The orbital preimage tree $T_{O^+(x)}=\varinjlim_n X_\infty(x_n)$ is independent (up to canonical isomorphism) of the choice of basepoint along the orbit; consequently, $\mathcal{G}_{O^+(x)}=\operatorname{Im}(G_K\to\Aut(T_{O^+(x)}))$ is independent of the choice of basepoint as well.
\item If $M\in\mathrm{PGL}_2(K)$, then $M$ induces a $G_K$-equivariant isomorphism $T_{O^+(x)}\xrightarrow{\sim}T_{O^+(Mx)}$, identifying the corresponding orbital arboreal images up to conjugacy. In particular, the \emph{conjugacy class over $K$} of $\mathcal{G}_{O^+(x)}$ is a conjugacy-invariant of the orbit in moduli.
\end{enumerate}
\end{proposition}
\begin{proof}
(a) For every $m\ge 0$ one has $\varphi^{n+m}(x)=\varphi^{m}(x_n)$, hence
\[
K\big(O^{+}(x)\big)=\bigcup_{m\ge 0}K\!\big(\varphi^{m}(x)\big)
=\bigcup_{m\ge 0}K\!\big(\varphi^{m}(x_n)\big)
=K\big(O^{+}(x_n)\big).
\]
(b) Replacing $x$ by $x_r=\varphi^r(x)$ replaces the directed system
\[
X_\infty(x_0)\hookrightarrow X_\infty(x_1)\hookrightarrow X_\infty(x_2)\hookrightarrow\cdots
\]
by its cofinal tail
\[
X_\infty(x_r)\hookrightarrow X_\infty(x_{r+1})\hookrightarrow X_\infty(x_{r+2})\hookrightarrow\cdots.
\]
By the universal property of filtered colimits in $G_K$-sets (Proposition~\ref{prop:universal-T}), the induced map $T_{O^+(x)}\to T_{O^+(x_r)}$ is a $G_K$-equivariant isomorphism. The orbital arboreal image $\mathcal{G}_{O^+(x)}$, being the image of $G_K$ in $\Aut(T_{O^+(x)})$, is therefore identified canonically with $\mathcal{G}_{O^+(x_r)}$ along this isomorphism.
(c) For $M\in\mathrm{PGL}_2(K)$, write $\varphi^{M}=M\circ\varphi\circ M^{-1}$. Then $O^{+}(Mx)=M\!\big(O^{+}(x)\big)$. At each level $m$, $M$ induces a bijection $X_\infty(x_m)\xrightarrow{\sim}X_\infty((Mx)_m)$ that intertwines the $G_K$-actions and respects the branch inclusions. Passing to the colimit yields a $G_K$-equivariant isomorphism $T_{O^+(x)}\xrightarrow{\sim}T_{O^+(Mx)}$, which identifies the orbital arboreal images up to conjugacy.
\end{proof}

\subsection{The target orbital symmetry group}\label{subsec:target_group}
When no critical collisions occur in the backward trees along the orbit (the ``regular'' situation), each branch inclusion $X_\infty(x_n)\hookrightarrow X_\infty(x_{n+1})$ identifies $X_\infty(x_n)$ with the subtree of $X_\infty(x_{n+1})$ rooted at $x_n$. In this case the finite backward trees along the orbit are each, abstractly, isomorphic to the standard $d$-ary rooted tree $T_d$, but \emph{not} canonically: the colimit $T_{O^+(x)}$ inherits the distinguished family of branch inclusions along the forward orbit, which is data not present in $T_d$. The natural target for the orbital Galois action is therefore $\Aut(T_{O^+(x)})$, which records this extra structure; an abstract isomorphism $T_{O^+(x)}\simeq T_d$ at the level of unrooted trees does not promote to a canonical isomorphism of automorphism groups respecting the orbital structure. In the presence of critical preimages, branching at critical points imposes additional relations.

\subsection{The orbital Galois assignment and unramifiedness}\label{subsec:sheaf}
Unless otherwise stated, orbits are considered in $\PP^1(\overline K)$.
For the local criterion, we restrict to integral representatives
$x\in\PP^1(\OO_K)$ with $\bar x\notin\PC(\tilde\varphi)$.
Let $\mathcal{O}_\varphi$ be the \emph{orbit groupoid} of $\varphi$, whose objects are forward orbits $O^+(x)$ in $\PP^1(\overline K)$ and whose arrows are orbit isomorphisms induced by $\varphi$ (e.g.\ base changes $x\mapsto \varphi^n(x)$) and by transport $x\mapsto \sigma(x)$ with $\sigma\in G_K$.
The orbital construction defines an \emph{orbital Galois assignment}
\[
\mathscr{G}_\varphi:\ \mathcal{O}_\varphi\longrightarrow \mathbf{Grp},\qquad
O^+(x)\longmapsto \mathcal{G}_{O^+(x)}=\operatorname{Im}\bigl(G_K\to\Aut(T_{O^+(x)})\bigr),
\]
which is functorial under orbit base change and under $G_K$-transport: an arrow $O^+(x)\to O^+(\varphi^n(x))$ corresponds to the canonical identification of orbital trees of Proposition~\ref{prop:canonic-orbital}(b), and an arrow $O^+(x)\to O^+(\sigma(x))$ corresponds to transport by $\sigma\in G_K$. The fiber over $O^+(x)$ is the orbital arboreal image defined in Section~\ref{subsec:colimit}.

\begin{definition}[Residually admissible orbit and the safe locus]\label{def:residually-admissible}
The residual étale locus $U_k=\PP^1_k\setminus\PC(\tilde\varphi)$ is not, in general, forward-invariant under $\tilde\varphi$: a point $\bar y\in U_k$ may satisfy $\tilde\varphi(\bar y)\in\PC(\tilde\varphi)$. We therefore single out the maximal forward-invariant subset of $U_k$,
\[
U_k^{\mathrm{safe}}\ :=\ \bigcap_{m\ge 0}\tilde\varphi^{-m}(U_k)\ \subseteq\ U_k,
\]
i.e.\ the set of $\bar y\in U_k$ such that $\tilde\varphi^m(\bar y)\in U_k$ for every $m\ge 0$. We say that an integral point $x\in\PP^1(\OO_K)$ is \emph{residually admissible} if $\bar x\in U_k^{\mathrm{safe}}$, equivalently, if $\overline{\varphi^m(x)}=\tilde\varphi^m(\bar x)\in U_k$ for all $m\ge 0$. The associated forward orbit $O^+(x)$ is then called a \emph{residually admissible orbit}.
\end{definition}

\begin{remark}[Why the safe locus is needed]\label{rem:why-safe}
The locus $U_k^{\mathrm{safe}}$ keeps the entire residual forward orbit inside $U_k$, where Theorem~\ref{thm:main} provides the level-$n$ unit-discriminant statement at each basepoint along the orbit. A simple example illustrates that this strengthening is genuine: take $K=\QQ_3$ and $\varphi(z)=z^2+1$, so that $\tilde\varphi(z)=z^2+1\in\mathbb{F}_3[z]$. In homogeneous coordinates the ramification divisor is cut out by $4XY\equiv XY\pmod 3$, so $\Crit(\tilde\varphi)=\{\bar 0,\infty\}$; the forward orbits give $\tilde\varphi(\bar 0)=\bar 1$, $\tilde\varphi(\bar 1)=\bar 2$, $\tilde\varphi(\bar 2)=\bar 2$, and $\tilde\varphi(\infty)=\infty$, so $\PC(\tilde\varphi)=\{\bar 1,\bar 2,\infty\}$ and $U_k=\{\bar 0\}\subset\PP^1_{\mathbb{F}_3}$. The point $\bar 0\in U_k$, but $\tilde\varphi(\bar 0)=\bar 1\in\PC(\tilde\varphi)$, so the residual forward orbit of $\bar 0$ exits $U_k$ at the first iterate; in particular $\bar 0\notin U_k^{\mathrm{safe}}$. Forward-invariance is therefore a genuine extra condition, and Definition~\ref{def:sheaf_unram} and Lemma~\ref{lem:uniform} below are formulated on the safe locus accordingly.
\end{remark}

\begin{definition}[Unramified at $v$ over the safe locus]\label{def:sheaf_unram}
Let $v$ be a non-archimedean place of $K$ with inertia $I_v\subset G_K$.
We say that the orbital Galois assignment $\mathscr{G}_\varphi$ is \emph{unramified at $v$ over the safe integral locus} if, for every residually admissible $x\in\PP^1(\OO_K)$ (i.e.\ $\bar x\in U_k^{\mathrm{safe}}$), the inertia subgroup has trivial image on the orbital tree of $x$,
\[
\rho_{O^+(x)}(I_v)\ =\ 1\quad\text{in }\Aut(T_{O^+(x)}).
\]
\end{definition}
This definition does not depend on the choice of a representative of the orbit: residual admissibility and the orbital tree are both invariant under $x\mapsto\varphi^n(x)$ (Proposition~\ref{prop:canonic-orbital}).

\begin{lemma}[Uniform inertia triviality on the safe locus]\label{lem:uniform}
If $\varphi$ has strict good reduction, then $\rho_{O^+(x)}(I_v)=1$ for every residually admissible $x\in\PP^1(\OO_K)$.
\end{lemma}
\begin{proof}
Let $x\in\PP^1(\OO_K)$ with $\bar x\in U_k^{\mathrm{safe}}$, and write $x_m=\varphi^m(x)$, so that $\bar x_m=\tilde\varphi^m(\bar x)\in U_k$ for every $m\ge 0$ by Definition~\ref{def:residually-admissible}. For each fixed $m$, the basepoint $x_m\in\PP^1(\OO_K)$ has $\bar x_m\in U_k$, so Theorem~\ref{thm:main}, applied at the basepoint $x_m$, gives that $I_v$ acts trivially on $X_n(x_m)$ for every $n\ge 0$. Hence $I_v$ acts trivially on the inverse-limit preimage tree $X_\infty(x_m)=\varprojlim_n X_n(x_m)$, and the branch inclusions $X_\infty(x_m)\hookrightarrow X_\infty(x_{m+1})$ are $I_v$-equivariant. Passing to the colimit, $I_v$ acts trivially on $T_{O^+(x)}=\varinjlim_m X_\infty(x_m)$, so $\rho_{O^+(x)}(I_v)=1$ in $\Aut(T_{O^+(x)})$.
\end{proof}

For each residually admissible $x\in\PP^1(\OO_K)$ and integers $m\ge 0$, $n\ge 1$, the inverse-preimage set $X_n(x_m)\subset\PP^1(\overline K)$ consists of the roots of $F_{n,x_m}\in K[T]$ (regarded projectively, accounting for the point at infinity when $\operatorname{lc}(F_{n,x_m})=0$); when these roots lie in $\PP^1(\overline\OO_K)$ they reduce to $\PP^1(\overline k)$, giving the \emph{orbital reduction map}
\[
\mathrm{red}_{m,n}\colon X_n(x_m)\longrightarrow \widetilde{X}_n(\bar x_m):=\tilde\varphi^{-n}(\bar x_m),
\]
$G_K$-equivariantly through the inertia action on the source.

\begin{proposition}[Strict good reduction $\Rightarrow$ bijective orbital specialization]\label{prop:specialization-from-SGR}
Assume $\varphi$ has strict good reduction. Then for every residually admissible $x\in\PP^1(\OO_K)$, every $m\ge 0$ and every $n\ge 1$, the orbital reduction map $\mathrm{red}_{m,n}\colon X_n(x_m)\to\widetilde X_n(\bar x_m)$ is a $G_K$-equivariant bijection. In concrete terms, $|X_n(x_m)|=|\widetilde X_n(\bar x_m)|=d^n$ and the reduction is one-to-one at every level.
\end{proposition}
\begin{proof}
Fix $m\ge 0$ and $n\ge 1$. Since $\bar x_m\in U_k$ by residual admissibility, Theorem~\ref{thm:main} gives $\Disc(F_{n,x_m})\in\OO_K^\times$ and $\operatorname{lc}(F_{n,x_m})\in\OO_K^\times$, so $F_{n,x_m}$ is a separable degree-$d^n$ polynomial whose reduction $\widetilde F_{n,\bar x_m}\in k[T]$ is also separable of degree $d^n$. Hence both $X_n(x_m)$ and $\widetilde X_n(\bar x_m)$ have cardinality $d^n$, and the splitting field $K(X_n(x_m))/K$ is unramified (Lemma~\ref{lem:disc-unram}). Over a Henselian discrete valuation ring, the reduction map of a finite separable étale algebra is a bijection on geometric points; this gives the bijectivity of $\mathrm{red}_{m,n}$. $G_K$-equivariance is immediate from the construction.
\end{proof}

\begin{proposition}[Bijective specialization at level $1$ $\Rightarrow$ strict good reduction]\label{prop:SGR-from-specialization}
Let $\widetilde\varphi:\PP^1_k\to\PP^1_k$ be the canonical residual morphism associated to a normalized integral lift of $\varphi$. Assume there exists $x_0\in\PP^1(\OO_K)$ such that
\begin{itemize}
\item $X_1(x_0)$ consists of $d$ distinct geometric points over $\overline K$, and
\item the specialization map
\[
\mathrm{red}_{0,1}\colon X_1(x_0)\longrightarrow \bigl(\widetilde\varphi^{-1}(\bar x_0)\bigr)_{\mathrm{red}}
\]
is bijective.
\end{itemize}
Then $\varphi$ has strict good reduction.
\end{proposition}
\begin{proof}
Let $e=\deg(\widetilde\varphi)\le d$. For a morphism $\PP^1_{\bar k}\to\PP^1_{\bar k}$ of degree $e$, every geometric fiber has scheme-theoretic length $e$, hence its underlying reduced set has cardinality at most $e$. By the hypotheses,
\[
d \;=\; |X_1(x_0)| \;=\; \bigl|\bigl(\widetilde\varphi^{-1}(\bar x_0)\bigr)_{\mathrm{red}}\bigr| \;\le\; e \;\le\; d,
\]
so $e=d$. By Theorem~\ref{thm:main} (the equivalence (1)$\Leftrightarrow$(2)), $\varphi$ has strict good reduction.
\end{proof}

\begin{remark}[Pure inertia-triviality does not detect degree drop]\label{rem:5z2+z}
Lemma~\ref{lem:uniform} produces inertia-triviality on the orbital tree as a \emph{consequence} of strict good reduction, but pure inertia-triviality is properly weaker than bijective specialization, and therefore fails to characterize it. The map $\varphi(z)=5z^2+z$ over $K=\QQ_5$ illustrates the subtlety. The integral lift $\Phi=[5X^2+XY:Y^2]$ has reduction $\tilde\Phi=[XY:Y^2]$, which after cancelling the common factor $Y$ defines $\tilde\varphi(z)=z$ of degree $1<2$; hence $\varphi$ has bad reduction. The level-$1$ fiber polynomial at $x_0=0$ is $F_{1,0}(T)=5T^2+T=T(5T+1)$, whose roots $\{0,-1/5\}$ are both $K$-rational, so $I_v$ acts trivially on $X_1(0)$. The reduction map $\mathrm{red}_{0,1}$, however, is \emph{not} bijective: the canonical residual morphism is $\widetilde\varphi(z)=z$, hence $\widetilde\varphi^{-1}(\bar 0)=\{\bar 0\}$ as a set, so the target of $\mathrm{red}_{0,1}$ has cardinality~$1$ while the source has cardinality~$2$. The level-$1$ degree drop of $\tilde\varphi$ is detected by the cardinality mismatch in $\mathrm{red}_{0,1}$, not by the inertia action on $X_1(0)$. This is the conceptual reason why Propositions~\ref{prop:specialization-from-SGR}--\ref{prop:SGR-from-specialization} are stated in terms of the reduction map $\mathrm{red}_{m,n}$ rather than in terms of the inertia subgroup alone. The map $\varphi(z)=5z^2+z$ is the $p=5$ instance of the family $\varphi(z)=pz^2+z$ studied as a bad-reduction example in Section~\ref{sec:examples} (Example~\ref{ex:bad}); the present remark complements that example by exhibiting, at the orbital level, the precise invariant that detects the level-$1$ degree drop of $\tilde\varphi$.
\end{remark}

\begin{remark}[Total space and the $G_K$-action]\label{rem:etale_space}
The disjoint union $\coprod_{O}\mathcal{G}_O$ over residually admissible forward orbits assembles the orbital arboreal images into a $G_K$-equivariant family, with $G_K$ acting by permuting the fibers and transporting structure among them. Lemma~\ref{lem:uniform} expresses inertia-triviality as a necessary condition for strict good reduction, while Propositions~\ref{prop:specialization-from-SGR}--\ref{prop:SGR-from-specialization} provide the genuine arithmetic equivalence in terms of bijective orbital specialization.
\end{remark}

\subsection{Local criteria and openness outside a finite set of places}\label{subsec:adelic}
By Theorem~\ref{thm:main}, for all but finitely many non-archimedean places $v$ where $\varphi$ has strict good reduction and the reduced orbit avoids the postcritical set, the inertia $I_v$ acts trivially on each fiber along the orbit. Hence, outside a finite set $S$, the local images are unramified; openness thus reduces to global (Frobenius) permutation statistics in the orbital automorphism group $\Aut(T_{O^+(x)})$ (cf.\ \cite{BostonJones,Stoll,SilvermanADS}).

\paragraph*{Adelic outlook.}
The orbital formalism suggests an adelic packaging: outside a finite set $S$ of places of bad
reduction or postcritical collision, inertia acts trivially on the fibers, so the local images are
unramified and controlled by Frobenius statistics in the orbital automorphism group $\Aut(T_{O^+(x)})$.
It is natural to expect that (under the usual non-special hypotheses) the global orbital image
is open in the restricted product of local targets, with equidistribution governed by
Chebotarev-type principles.

\subsection{Classical vs. orbital open image}\label{subsec:open_image}
Classically, one fixes $x$ and asks for open image of $\rho_x$ in $\Aut(X_\infty(x))$ (cf.\ Odoni--Jones; see \cite{JonesBesancon2013}). The drawback is the basepoint dependence. The orbital setting removes it; we record the natural open-image expectation as a question rather than a conjecture, since the orbital target $\Aut(T_{O^+(x)})$ is the genuine intrinsic target and the formulation should be on it directly.

\begin{question}[Orbital Open Image]\label{q:orbital-regular}
Let $K$ be a global field, and let $\varphi\in K(z)$ be a rational map of degree $d\ge 2$ which is \emph{non-special} (i.e.\ not conjugate over $\overline K$ to a power map, Chebyshev, or a Lattés map). Let $O^+(x)$ be a forward orbit which is infinite and such that no $x_n$ lies on the (global) postcritical set $\PC(\varphi)\subset\PP^1(\overline K)$ (here we mean the postcritical set of $\varphi$ over the algebraic closure, distinct from the residual $\PC(\tilde\varphi)\subset\PP^1_k$ used in the local statements above). Is it true that the orbital representation
\[
\rho_{O^+(x)}\colon G_{K(O^+(x))}\longrightarrow \Aut(T_{O^+(x)})
\]
has \emph{open image} (equivalently: image of finite index) in $\Aut(T_{O^+(x)})$?
\end{question}

In the regular rooted case, this question specializes to the usual arboreal open-image problem after choosing a compatible labelling of the preimage tree; the orbital formulation introduced above keeps track of the non-canonical nature of such choices along a forward orbit.

\begin{remark}[Regular vs.\ general orbits]\label{rem:general-wreath}
In the regular case (no critical collisions along the orbit), $\Aut(T_{O^+(x)})$ is, abstractly, isomorphic to $\Aut(T_d)$, although the isomorphism depends on the choice of branch identifications and is not canonical; in the presence of critical preimages, $\Aut(T_{O^+(x)})$ encodes the full wreath recursion together with the multiplicity data at the critical branchings. In either case, the natural and orbit-invariant target for the orbital Galois action is $\Aut(T_{O^+(x)})$, which is the form used in Question~\ref{q:orbital-regular}.
\end{remark}

\begin{remark}[Pointwise versus orbital open image]\label{rem:transfer-pointwise-orbital}
The relation between the orbital arboreal image $\mathcal{G}_{O^+(x)}=\operatorname{Im}(G_K\to\Aut(T_{O^+(x)}))$ and the pointwise images $\operatorname{Im}(\rho_{x_n})\subseteq\Aut(X_\infty(x_n))$ is more subtle than a direct transfer. The branch inclusion $X_\infty(x_n)\hookrightarrow T_{O^+(x)}$ exhibits each pointwise tree as a rooted subtree of the orbital tree, and the action of $G_K$ on $T_{O^+(x)}$ records information about how Galois permutes points across sister branches that is not visible in any single pointwise tower. A clean transfer between pointwise and orbital open-image statements therefore requires additional combinatorial hypotheses on the branching pattern along the orbit and is best treated separately.
\end{remark}

\begin{remark}[Special maps and small images]\label{rem:special-small}
For the special families (power, Chebyshev, Lattés), the orbital automorphism group $\Aut(T_{O^+(x)})$ is significantly smaller than $\Aut(T_d)$ (e.g.\ virtually abelian or coming from endomorphisms of elliptic curves), and one cannot expect open image in $\Aut(T_d)$. The formulation above isolates the \emph{right} target in all cases via $\Aut(T_{O^+(x)})$.
\end{remark}

\section{Proofs of Theorem \ref{thm:main}}\label{sec:proofs}
\begin{proof}[Proof of $(1)\Leftrightarrow(2)$]
By construction (\S\ref{subsec:goodred}), if $\widetilde F=h F_0$, $\widetilde G=h G_0$ with $h=\gcd(\widetilde F,\widetilde G)$, then $\widetilde\varphi=[F_0:G_0]$ has degree $e=d-\deg h$. Hence $e=d$ iff $\deg h=0$ iff $h\in k^\times$, equivalently iff $\widetilde F,\widetilde G$ are coprime in $k[X,Y]$, equivalently iff $\Res(F,G)\in\OO_K^\times$ (\cite[Theorem~2.15]{SilvermanADS}). This is strict good reduction.
\end{proof}

\begin{proof}[Proof of $(2)\Rightarrow(3)$]
Assume $e=d$, i.e.\ $\widetilde\varphi$ is a morphism $\PP^1_k\to\PP^1_k$ of degree $d$. Since $p\nmid d$ by the standing assumption, the inseparable degree of $\widetilde\varphi$ (a power of $p$ dividing $d$) is $1$, so $\widetilde\varphi$ is separable. The reduced postcritical set $\PC(\widetilde\varphi)$ is finite (Proposition~\ref{prop:etale-iff-PC}), so $U_k:=\PP^1_k\setminus\PC(\widetilde\varphi)$ is a nonempty Zariski open subset.

By Proposition~\ref{prop:etale-iff-PC}, for every geometric point $\bar x\to U_k$ the fiber of $\widetilde\varphi$ over $\bar x$ is étale. Combined with $\deg\widetilde\varphi=d$, this gives that the restricted morphism $\widetilde\varphi^{-1}(U_k)\to U_k$ is finite étale of degree $d$, establishing (3).
\end{proof}

\begin{proof}[Proof of $(3)\Rightarrow(2)$]
Assume there exists $U_k$ as in (3). The restriction $\widetilde\varphi^{-1}(U_k)\to U_k$ has degree $d$, and the degree of a dominant morphism between integral schemes is determined by the function-field extension and is preserved by restriction to nonempty Zariski opens. Hence $\deg\widetilde\varphi=d$, i.e.\ $e=d$.
\end{proof}

\begin{proof}[Proof of the Consequence (unit fiber polynomials and unramifiedness)]
Assume the equivalent conditions (1)--(3) hold, and let $x\in\OO_K$ be a finite point with $\bar x\in U_k$. By Lemma~\ref{lem:unit-resultant}, $\Res(P_n,Q_n)\in\OO_K^\times$ for all $n\ge 1$, so each iterate has strict good reduction. Since $\bar x\notin\PC(\widetilde\varphi)$, Proposition~\ref{prop:etale-iff-PC} shows that each reduced fiber $\widetilde F_{n,\bar x}\in k[T]$ has degree $d^n$ and is separable; equivalently $\operatorname{lc}(F_{n,x})\in\OO_K^\times$ and $\Disc(F_{n,x})\in\OO_K^\times$. By Lemma~\ref{lem:disc-unram}, $K(X_n(x))/K$ is unramified, and inertia $I_v$ acts trivially on $X_\infty(x)$.
\end{proof}

\section{Examples over \texorpdfstring{$\QQ_p$}{Qp}}\label{sec:examples}
\begin{example}[Good reduction: $\varphi(z)=z^2-1$ for $p\neq 2$]\label{ex:good1}
Consider $F(X,Y)=X^2-Y^2$ and $G(X,Y)=Y^2$, so that $\varphi$ is given by $z \mapsto z^2-1$. The resultant $\Res(F,G)$ equals $1 \in \ZZ_p^\times$, confirming that $\varphi$ has strict good reduction. The reduction is $\tilde\varphi(z)=z^2-1$ of degree $2$ over $\mathbb{F}_p$. The critical points of $\tilde\varphi$ are $\Crit(\tilde\varphi) = \{0, \infty\}$, and the reduced postcritical set is $\PC(\tilde\varphi) = \{-1, 0, \infty\}$. For any $x \in \ZZ_p$ such that $\bar{x} \notin \{-1, 0, \infty\}$, the reduced level-$1$ fiber is $\tilde{F}_{1,\bar{x}}(T) = T^2 - (\bar{x} + 1)$, which has degree $2$ and is separable over $\mathbb{F}_p$ whenever $p \neq 2$. Explicitly, the fiber discriminant is $\Disc(F_{1,x}) = 4(x + 1)$, which is a $p$-adic unit provided $v_p(x + 1) = 0$. Therefore, $K(X_1(x))/\QQ_p$ is unramified. By Lemma~\ref{lem:unit-resultant}, this property persists at every level: each iterate satisfies $\Disc(F_{n,x}) \in \ZZ_p^\times$, ensuring that $K(X_n(x))/\QQ_p$ remains unramified for all $n \geq 1$. This verifies, through explicit computation, that the inertia group acts trivially on each preimage tower, consistent with the local criterion for good reduction.
\end{example}

\begin{example}[Good reduction: $\varphi(z)=z^2+p$]\label{ex:good2}
Here $F(X,Y)=X^2+pY^2$, $G(X,Y)=Y^2$. Then $\Res(F,G)=F(1,0)^2=1\in\ZZ_p^\times$; thus $\varphi$ has strict good reduction and $\tilde\varphi(z)=z^2$. We have $\PC(\tilde\varphi)=\{0,\infty\}$. If $\bar x\notin\{0,\infty\}$, the reduced fiber $\tilde F_{1,\bar x}(T)=T^2-\bar x$ has degree $2$ and is separable, so $\Disc(F_{1,x})=-4(p-x)\in\ZZ_p^\times$. This aligns with Theorem \ref{thm:main}.
\end{example}

\begin{remark}[Why the naïve NOS fails]\label{rem:naive_fails}
Even when $\varphi$ has strict good reduction, unramified towers need not occur for \emph{all} base points $x\in\PP^1(\overline K)$. The map $\varphi(z)=z^2+p$ over $\QQ_p$ already shows this. In an integral model one has $\tilde\varphi(z)=z^2$ and hence $\PC(\tilde\varphi)=\{0,\infty\}$. If one quantifies over all $x$, ramification can appear for reasons unrelated to the dynamics:
\begin{itemize}
\item If $v_p(x)<0$ (so $x\notin\OO_K$), then $\bar x=\infty\in\PC(\tilde\varphi)$ and
\[
\Disc\big(F_{1,x}\big)\;=\;-4\big(p-x\big)
\]
has negative $p$-adic valuation, hence the level--$1$ extension is ramified.
\item If $x\in\OO_K$ but $\bar x=0\in\PC(\tilde\varphi)$, then $\widetilde{F_{1,x}}(T)=T^2$ is inseparable and $\Disc(F_{1,x})\notin\OO_K^\times$, so ramification occurs again.
\end{itemize}
Thus, it is essential in Theorem~\ref{thm:main} to restrict to the integral locus with $\bar x\notin\PC(\tilde\varphi)$, where the reduced fibers are degree-$d$ and étale---equivalently, the fiber discriminants are units and the towers are unramified.
\end{remark}

\begin{example}[Non-polynomial with strict good reduction via conjugation]\label{ex:conjugate}
Fix a prime $p$ and consider $\varphi(z)=z^2+p$ over $\QQ_p$, which has strict good reduction
(Example~\ref{ex:good2}). Conjugate by the inversion $M(z)=1/z\in\mathrm{PGL}_2(\OO_{\QQ_p})$ to obtain
\[
\psi=\varphi^{M}=M\circ\varphi\circ M^{-1}(z)=\frac{z^2}{1+p z^2}.
\]
In homogeneous coordinates, $\psi([X:Y])=[X^2:Y^2+pX^2]$ has degree $2$ and
$\Res(X^2,Y^2+pX^2)=1\in\ZZ_p^\times$. Hence $\psi$ has strict good reduction and
$\tilde\psi(z)=z^2$.
Therefore $\PC(\tilde\psi)=\{0,\infty\}$, and for every $x\in\PP^1(\OO_{\QQ_p})$ with
$\bar x\notin\{0,\infty\}$, the reduced fibers of all iterates are étale of degree $2$;
equivalently, the fiber polynomials have unit leading coefficient and unit discriminant, so
$K(X_n(x))/\QQ_p$ is unramified for all $n\ge1$.
This illustrates that the criterion applies uniformly to non-polynomial rational maps.
\end{example}

\begin{example}[Bad reduction: $\varphi(z)=p z^2+z$]\label{ex:bad}
We have $F(X,Y)=pX^2+XY$, $G(X,Y)=Y^2$. The reductions satisfy $\widetilde F=XY$, $\widetilde G=Y^2$, so $h=\gcd(\widetilde F,\widetilde G)=Y$ and the canonical residual morphism is $\widetilde\varphi=[X:Y]$, the identity, of degree $e=1<2=d$. Hence condition (2) of Theorem~\ref{thm:main} fails and $\varphi$ has bad reduction; for any $\bar x$, the fiber of $\widetilde\varphi$ over $\bar x$ is the single point $\bar x$, not a degree-$2$ scheme.
\end{example}

\section{The degree-one case (for completeness)}\label{sec:d1}
Our analysis focuses on $d\ge 2$, where arboreal representations exhibit nontrivial branching and Galois phenomena. For completeness, we briefly address the $d=1$ case.
For $\deg\varphi=1$, every map is a M\"obius transformation
\[
\varphi(z)=\frac{az+b}{cz+d},\qquad M=\begin{pmatrix}a&b\\ c&d\end{pmatrix},\ \det M\ne0.
\]
There are no critical points, hence $\PC(\tilde\varphi)=\varnothing$. The preimage towers are linear:
for any $x\in\PP^1(\overline K)$ and any $n\ge1$, the level-$n$ fiber is a single point defined over $K$, and if $x\in\PP^1(\OO_K)$ then $K(X_n(x))=K$ for all $n$.

\begin{proposition}[Degree one]\label{prop:d1}
Let $K$ be a non-archimedean local field and $\varphi\in K(z)$ have degree $1$ with matrix representative $M$. The following are equivalent:
\begin{enumerate}[label=(\alph*)]
\item $\varphi$ has strict good reduction.
\item $\det M\in\OO_K^\times$ (equivalently, $\Res(F,G)=\det M$ is a unit).
\item There exists a nonempty open $U_k\subset\PP^1_k$ such that, for every $x\in\PP^1(\OO_K)$ with $\bar x\in U_k$, the reduced level--$1$ fiber has degree $1$.
\end{enumerate}
Moreover, for every $x\in\PP^1(\OO_K)$ and every $n\ge1$, one has $K(X_n(x))=K$ and $I_v$ acts trivially on $X_\infty(x)$.
\end{proposition}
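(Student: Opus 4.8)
The plan is to reduce everything to linear algebra over $\OO_K$ via the matrix model. Since $\deg\varphi=1$, $\varphi$ is represented by a matrix in $\mathrm{GL}_2(K)$, unique up to $K^\times$-scaling; after rescaling we may take $M\in M_2(\OO_K)$ \emph{primitive} (some entry a unit), still unique up to $\OO_K^\times$, with homogeneous forms $F=aX+bY$, $G=cX+dY$ and $\Res(F,G)=\det M$. Every integral model of $\varphi$ has the form $(\mu F,\mu G)$ with $\mu\in\OO_K\setminus\{0\}$, so $\Res(\mu F,\mu G)=\mu^2\det M$; hence the minimal resultant valuation over integral models equals $v(\det M)$, attained by $M$ itself. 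Thus $\varphi$ has strict good reduction iff $\det M\in\OO_K^\times$, which is (a)$\Leftrightarrow$(b). Equivalently, for any integral model, $\deg\tilde\varphi=1$ already forces $\tilde M$ to have rank $2$, so the two clauses in the definition of strict good reduction coincide when $d=1$.

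For (b)$\Rightarrow$(c) I would observe that $\det M\in\OO_K^\times$ makes $\tilde M\in\mathrm{GL}_2(k)$, so $\tilde\varphi$ is a degree-one automorphism of $\PP^1_k$ and every fiber $\tilde\varphi^{-1}(\bar x)$ is a single reduced $k$-point. Taking the nonempty open $U_k:=\PP^1_k\setminus\{\infty,\tilde\varphi(\infty)\}$ (excising the two values of $\bar x$ at which the affine fiber polynomial could degenerate), one checks directly that for $x\in\PP^1(\OO_K)$ with $\bar x\in U_k$ the preimage $\varphi^{-1}(x)$ is a single finite $\OO_K$-point and $\widetilde{F_{1,\bar x}}$ has degree $1$, which is (c).

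For (c)$\Rightarrow$(b) I would argue by contraposition, mimicking the $(2)\Rightarrow(1)$ step of Theorem~\ref{thm:main}. If $\det M\in\mathfrak m$ then $\tilde M\neq 0$ (primitivity) but has rank $\le1$, so $\tilde F$ and $\tilde G$ share a common linear factor; after clearing it the reduced morphism $\tilde\varphi$ is \emph{constant}, hence its reduced level-one fiber is empty, of degree $0$, for every $\bar x$ other than that constant value, and no nonempty open $U_k$ can have all reduced fibers of degree $1$ — contradicting (c). For the ``moreover'' part: each iterate $\varphi^n$ again lies in $\mathrm{PGL}_2(K)$, so $X_n(x)=(\varphi^n)^{-1}(x)$ is a single point of $\PP^1(K(x))$; for $x\in\PP^1(\OO_K)$ this is $\PP^1(K)$, whence $K(X_n(x))=K$ for all $n\ge1$, $X_\infty(x)$ is a one-point $G_K$-set, and $I_v$ (indeed all of $G_K$) acts trivially — a computation needing neither the reduction hypothesis nor $d\ge2$.

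The one delicate point is the reading of ``the reduced level-one fiber has degree $1$'' in (c): it must be interpreted for the reduced \emph{morphism} $\tilde\varphi$, that is, for $\widetilde{P_1-xQ_1}$ after clearing any common factor of $\tilde P_1$ and $\tilde Q_1$, since the naive expression $\widetilde{P_1}-\bar x\,\widetilde{Q_1}$ can spuriously retain degree $1$ in the bad-reduction case. Once this convention is fixed and the two affine base points $\infty$ and $\tilde\varphi(\infty)$ are excised from $U_k$, the equivalences unwind directly from the uniqueness up to an $\OO_K$-scalar of integral models of a degree-one map; I expect this bookkeeping, rather than any substantive step, to be the only real obstacle.
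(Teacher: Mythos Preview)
Your argument is correct and follows essentially the same route as the paper's (very terse) proof: both reduce (a)$\Leftrightarrow$(b) to the identity $\Res(F,G)=\det M$, deduce (c) from invertibility of $\tilde M$, and obtain (c)$\Rightarrow$(b) by noting that rank-one $\tilde M$ forces a constant reduced map with generically empty fibers. Your explicit choice of $U_k$ and, especially, your caveat that ``degree~$1$ reduced fiber'' must be read for the reduced \emph{morphism} (not the naive polynomial $\tilde P_1-\bar x\,\tilde Q_1$, which can spuriously remain linear when $\det M\in\mathfrak m$) are genuine clarifications the paper's proof leaves implicit.
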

\begin{proof}
Since $F,G$ are linear forms, $\Res(F,G)=ad-bc=\det M$. Thus (a)$\Leftrightarrow$(b). If $\det M\in\OO_K^\times$,
then $\tilde M\in\mathrm{PGL}_2(k)$ is invertible and the reduced map has degree $1$, whence (c). Conversely, if (c) holds, the reduced fiber has degree $1$ on a Zariski open set, forcing $\deg\tilde\varphi=1$ and hence $\det M\in\OO_K^\times$.
The statement on towers follows from linearity.
\end{proof}

\begin{corollary}[Orbital open image is vacuous for $d=1$]\label{cor:d1-open-image}
If $\deg\varphi=1$, then every finite preimage fiber consists of a single point. Hence the induced Galois action on the orbital preimage tree $T_{O^+(x)}$ is trivial, and the orbital representation
\[
\rho_{O^+(x)}:G_K\to \Aut(T_{O^+(x)})
\]
has trivial image. In particular, the open-image question is vacuous in degree one.
\end{corollary}

\section{Remarks on Context and the \texorpdfstring{$\ell$}{l}-adic module}\label{sec:remarks}
The framework introduced here yields three concrete consequences for subsequent work in both local and global contexts.
\begin{itemize}
\item \text{Orbit-level packaging.} The orbital preimage tree $T_{O^+(x)}$ consolidates the backward preimage trees along a forward orbit into a single canonical orbit-level object, characterized by the universal property of Proposition~\ref{prop:universal-T} as a colimit in $G_K$-sets, and the corresponding orbital arboreal image $\mathcal{G}_{O^+(x)}=\Im(G_K\to\Aut(T_{O^+(x)}))$ is the image of the induced Galois action on this tree. This construction is independent of the basepoint (Proposition~\ref{prop:canonic-orbital}) and interacts effectively with iteration and conjugation (see Section~\ref{sec:orbital-open-image}); these properties are the key structural features for comparing fibers across levels and for formulating global questions.
\item \text{Local criterion on a natural open set.} Formulating the local Galois criterion on the residual open locus $\PP^1_k\setminus\PC(\widetilde\varphi)$ clarifies the role of the reduced postcritical set. \emph{Under strict good reduction}, on this locus the reduced fibers have degree $d$ and are étale, so the discriminants are units and the associated towers are unramified (see Theorem~\ref{thm:main} and the examples). This approach isolates the genuinely dynamical contribution to ramification.
\item \text{Compatibility with global considerations.} The orbital target $\Aut(T_{O^+(x)})$ provides an orbit-invariant formulation of the open-image question (Question~\ref{q:orbital-regular}) and isolates the regular-vs-branching distinction (Remark~\ref{rem:general-wreath}); both ingredients are needed to relate local Frobenius statistics to global open-image statements.
\end{itemize}

\subsection{The arboreal Tate module}\label{subsec:tate}
The arboreal preimage data carries a natural $\ell$-adic Galois module, and the language for describing its $p$-adic counterpart requires care; we record both cases here.
\begin{itemize}
\item \text{Case $\ell \neq p$:} The module $T_\ell(x) = \varprojlim_n \ZZ_\ell[X_n(x)]$ is an $\ell$-adic Galois representation. By Theorem~\ref{thm:main}, if $\varphi$ has strict good reduction and $x\in\OO_K$ has reduction $\bar x$ outside $\PC(\widetilde\varphi)$, then $T_\ell(x)$ is \emph{unramified} as a $G_K$-representation.

\item \text{Case $\ell = p$:} This is a $p$-adic representation. Calling it \emph{crystalline} requires constructing a compatible Frobenius structure on the tower. If such a structure exists and the representation is crystalline, then standard $p$-adic Hodge theory implies potential good reduction. For degree bounds on \emph{attaining} potentially good reduction after a finite extension (via conjugation), compare \cite{BenedettoIMRN2015}. Related $p$-adic NOS-type criteria are known for curves via the $p$-adic unipotent fundamental group \cite{AndreattaIovitaKim2015} and for $K3$ surfaces via monodromy/crystalline methods \cite{PerezBuendiaAMQ2019,HernandezMadaPadova2021,LiedtkeMatsumoto2018}. A complete treatment of the crystalline case requires constructing compatible Frobenius structures on the tower and is the natural sequel to the present paper; here we use the ramification consequences of Theorem~\ref{thm:main}.
\end{itemize}

\backmatter
\section*{Declarations}
\noindent\textbf{Funding.} The author was partially supported by the SECIHTI (Ciencia de Frontera) grant CF 2019/217367.\\[2pt]
\noindent\textbf{Competing interests.} The author declares no competing interests.\\[2pt]
\noindent\textbf{Data availability.} No datasets were generated or analyzed during the current study.

\end{document}